%% file: LS.tex
\documentclass[11pt,english]{amsart}

\input{packages_and_functions.for_preamble.tex}

\usepackage{needspace}
\definecolor{articlelink}{rgb}{0,0,0.45}
\hypersetup{linkcolor=articlelink,citecolor=articlelink,urlcolor=articlelink,
  pdftitle={The Lange--Stuhler Theorem via Gerbes},
  pdfauthor={Lei Zhang}}

\theoremstyle{plain}
\newtheorem{thm}{Theorem}[section]
\newtheorem{lem}[thm]{Lemma}
\newtheorem{prop}[thm]{Proposition}
\newtheorem{cor}[thm]{Corollary}
\newtheorem{thmI}{Theorem}

\theoremstyle{remark}
\newtheorem{rmk}[thm]{Remark}

\input{packages_and_functions.tex}

\newcommand{\Fp}{\mathbb F_p}
\newcommand{\Fq}{\mathbb F_q}
\newcommand{\Eq}{\operatorname{Eq}}
\newcommand{\Fixst}{\operatorname{Fix}}

\begin{document}

\title{The Lange--Stuhler Theorem via Gerbes}
\author{Lei Zhang}
\email{cumt559@gmail.com}
\address{Sun Yat-Sen University\\ School of Mathematics (Zhuhai)\\
Zhuhai, Guangdong Province\\ China}
\date{September 2026}
\subjclass[2020]{14F35, 14D23, 14L15}
\keywords{Fibered categories, stacks, gerbes, Lange--Stuhler's theorem}

\begin{abstract}
    We recast the Lange–Stuhler theorem as the universal property of a \(2\)-cartesian square of classifying stacks. This formulation shows that every Frobenius-periodic vector bundle of rank \(r\) on an arbitrary fibered category in characteristic \(p>0\) is trivialized by a finite étale torsor, without geometric hypotheses or restrictions on the ground field beyond its characteristic. The construction extends from \(\mathrm{GL}_n\) to group schemes over finite fields that are connected affine or of finite type.
\end{abstract}

\maketitle

\section*{Introduction}

The Lange--Stuhler theorem asserts that a vector bundle \(E\)
on a normal connected variety \(X\) over a perfect field \(k\)
of characteristic \(p>0\) is trivialized by a finite \'etale cover
if it is Frobenius-periodic, that is,
\(
F_X^{n,*}E\simeq E,\text{ for some }n\geq1.
\)
See \cite[Prop.~4.1.1]{Katz73} and \cite{LS77}.
Deligne extended this to a correspondence between Frobenius-invariant principal
$G$-bundles and $G(\F_q)$-bundles over $X$, for a connected algebraic group
$G/\F_q$. This is recalled in \cite[\S 3.2]{Laszlo01} (see also \cite[\S 1]{BD07}).

In this note, we characterize Deligne's correspondence as the 2-universal property
of a fiber product of gerbes. This yields a short proof valid for arbitrary
categories fibered in groupoids over any field of characteristic $p$. Thus we
need neither the geometric assumptions of the classical formulation nor the
assumption that the ground field is perfect. The main geometric input is the generalized Lang theorem (Lem.~\ref{lem:lang}).

Fix a prime power \(q=p^a\). All fibered categories below
are fibered in groupoids. We work with fpqc classifying stacks and
separated group schemes. A \(G\)-torsor over \(\sX\) means a
1-morphism \(\sX\to BG\). For an abstract group \(\Gamma\), write
\resizebox{3.5cm}{!}{
    $\Gamma_K^{\mathrm{const}}:=\coprod_{\gamma\in\Gamma}\Spec K$
}
for its constant group scheme over a field \(K\).
Write \(F_{\sX}\) for absolute \(p\)-Frobenius: on a scheme-valued
object \(x\in\sX(S)\), it is given by \(F_S^*x\).
This defines Frobenius on any category fibered in groupoids over
\(\Fp\), naturally in \(\sX\); see
\cite[Notations and Conventions]{TZ1}. For \(\sX/\Fq\),
\(F_{\sX}^a\) is an \(\Fq\)-morphism, and for \(G/\Fq\)
we have \(F_{BG}^{as}\simeq B(F_G^{as})\) for every \(s\geq1\).
Mapping categories below are groupoids, with \(2\)-isomorphisms
as arrows.

\Needspace{12\baselineskip}
\begin{thmI}\label{thm:main}
Let \(G\) be a connected group scheme over \(\Fq\), assumed
either affine or of finite type. For \(s\geq1\), set
\(\sigma\coloneqq F_G^{as}\) and
\(
H\coloneqq \Eq(\sigma,\id_G)\) the equalizer:
\[
H(S)=\{g\in G(S)\mid\sigma(g)=g\}\quad(\forall\, S/\Fq).
\]
For every category fibered in groupoids \(\sX/\Fq\), there is
a natural equivalence
\[
\Hom_{\Fq}(\sX,BH)\xrightarrow{\ \sim\ }
\left\{\,\vcenter{\hbox{\(
\xymatrix@C=1.5em@R=0.7em{
\sX\ar[rr]^{F_{\sX}^{as}}\ar[ddr]_{u}
  &&\sX\ar[ddl]^{u}\\
  &\scriptstyle\alpha\;\Leftarrow&\\
  &BG&
}
\)}}\,\right\}.
\]
The right side consists of triangles with a specified
\(2\)-isomorphism \(\alpha:uF_{\sX}^{as}\Rightarrow u\).
A morphism \((u,\alpha)\longrightarrow(u',\alpha')\)
is a \(2\)-isomorphism \(b:u\Rightarrow u'\) such that
\[
b\circ\alpha=\alpha'\circ(bF_{\sX}^{as}).
\]
The equivalence sends \(v\) to \(u=Bi\circ v\), where
\(i:H\hookrightarrow G\), with its canonical Frobenius structure.
The group scheme \(H\) is profinite \'etale; if \(G\) is of
finite type, \(H\) is finite \'etale and
\[
H\times_{\Fq}\mathbb F_{q^s}
\simeq G(\mathbb F_{q^s})_{\mathbb F_{q^s}}^{\mathrm{const}}.
\]
\end{thmI}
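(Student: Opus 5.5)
The plan is to realize the right-hand side as morphisms into a $2$-fiber product and then identify that fiber product with $BH$. Consider the $2$-cartesian square
\[
\xymatrix{
\mathcal{E}\ar[r]\ar[d] & BG\ar[d]^{\Delta}\\
BG\ar[r]_-{(B\sigma,\id)} & BG\times_{\Fq} BG .
}
\]
By the $2$-universal property, a morphism $\sX\to\mathcal E$ is the same as a morphism $u:\sX\to BG$ together with a $2$-isomorphism $B\sigma\circ u\Rightarrow u$. Since $F^{as}_{BG}\simeq B(F^{as}_G)=B\sigma$ and Frobenius is natural in $\sX$, we have $B\sigma\circ u\simeq F^{as}_{BG}\circ u\simeq u\circ F^{as}_{\sX}$ canonically. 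Under this identification the data $(u,\alpha)$ of the statement, and the morphisms $b$ with $b\circ\alpha=\alpha'\circ(bF^{as}_{\sX})$, become exactly $\Hom(\sX,\mathcal E)$. This gives a natural equivalence, functorial in $\sX$, between the right side and $\Hom_{\Fq}(\sX,\mathcal E)$. So it suffices to produce an equivalence $BH\simeq\mathcal E$ compatible with $Bi$, and the map $BH\to\mathcal E$ is the obvious one. For it, note that $B\sigma\circ Bi=B(\sigma i)=B(i)$ because $\sigma i=i$ (this is the canonical Frobenius structure).

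To identify $\mathcal E$, I would use the standard description of such a fiber product. The $2$-fiber product $BG\times_{BG\times BG}BG$ along $\Delta$ and $(B\sigma,\id)$ is the quotient stack $[G/G]$, where $G$ acts on itself by $\sigma$-twisted conjugation $h\cdot g=\sigma(h)\,g\,h^{-1}$. Concretely, an object over $S$ is a $G$-torsor $P$ together with an isomorphism $\sigma_*P\simeq P$. Locally, with $P$ trivial, this is an element $g\in G(S)$, and changing the trivialization twists $g$ as above. The stabilizer of $1\in G$ is precisely $\Eq(\sigma,\id)=H$. Hence $\mathcal E\simeq[G/G]$ contains $BH=[\{1\}/H]$, and the claim reduces to showing that the orbit map $L:G\to G$, $h\mapsto\sigma(h)h^{-1}$, is an fpqc epimorphism. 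That is, every $g\in G(S)$ is fpqc-locally of the form $\sigma(h)h^{-1}$. Given this, $[G/G]\simeq[(G/H)/G]\simeq BH$, and the composite $BH\to\mathcal E\to BG$ is $Bi$.

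The key input and main obstacle is exactly this surjectivity, namely the generalized Lang theorem (Lem.~\ref{lem:lang}). For $G$ connected of finite type, the differential of $\sigma$ vanishes, so $L$ is étale. Moreover $L$ is surjective by Lang's theorem, so it is faithfully flat and hence an fpqc cover. In the connected affine case I would write $G=\varprojlim G_i$ as a cofiltered limit of connected affine algebraic quotients; this requires some care, e.g. quotients of a connected group are connected. Then $L$ is the limit of the Lang maps $L_i$, which are finite étale $H_i$-torsors, and a limit of such torsors is again faithfully flat. I will simply invoke Lem.~\ref{lem:lang} for this.

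Finally, $H=\ker$ of the Lang torsor $L$, being the fiber of $L$ over $1$, is étale (finite étale in the finite-type case, as a fiber of a finite étale map) and profinite étale in the affine case as a limit. Over $\mathbb F_{q^s}$, Frobenius $\sigma$ is the $q^s$-Frobenius. So $H(\bar{\mathbb F}_q)=G(\mathbb F_{q^s})$, with the Galois action of $\mathrm{Gal}(\bar{\mathbb F}_q/\mathbb F_{q^s})$ trivial on it. Hence $H_{\mathbb F_{q^s}}$ is the constant group $G(\mathbb F_{q^s})^{\mathrm{const}}_{\mathbb F_{q^s}}$.
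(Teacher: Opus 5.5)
Your proposal is correct and follows essentially the paper's route. The right-hand side is $\Hom_{\Fq}(\sX,-)$ of the $2$-fiber product of $\Delta$ with the graph of $B\sigma$, and your identification of that fiber product with $BH$ via the $\sigma$-twisted conjugation quotient (transitive orbit map, stabilizer $H$) is a repackaging of Prop.~\ref{prop:cartesian}: there every pair $(P,\varphi)$ is shown to be fpqc-locally isomorphic to $e$ with $\operatorname{Aut}(e)=H$, and Lem.~\ref{lem:lang} supplies the fpqc-surjectivity.

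One caveat concerns your aside that $d\sigma=0$ makes $L$ étale. That tangent-space criterion needs smooth source and target. For non-reduced connected $G$ allowed by the theorem (e.g.\ $\mu_p$), it only gives unramifiedness, not flatness. This is why the paper proves formal étaleness by lifting along square-zero thickenings, using that $F_T^{as}$ factors through $T_0$. Since you ultimately invoke Lem.~\ref{lem:lang}, your proof stands.
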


In particular, for \(s=1\) and finite-type \(G\),
\(H=G(\Fq)_{\Fq}^{\mathrm{const}}\).
There are no geometric hypotheses on \(\sX\): it may be any
fibered category over any field extension \(k/\Fq\).
The triangle is taken over \(\Fq\), since \(F_{\sX}^{as}\)
need not be \(k\)-linear. Taking \(q=p\) and \(G=\GL_r\)
gives Lange--Stuhler for arbitrary fibered categories over every
field of characteristic \(p\); see Cor.~\ref{cor:LS}.

\section{The cartesian square of gerbes}

Let \(K\) be a field, let \(G/K\) be a group scheme, and let
\(\sigma:G\to G\) be an endomorphism over \(K\).
Define the Lang morphism and its identity fiber by
\[
L_\sigma(g)=g\sigma(g)^{-1},\qquad
H=L_\sigma^{-1}(1)=\Eq(\sigma,\id_G).
\]
The fiber \(H\) is a subgroup scheme. In general \(L_\sigma\)
is not a group homomorphism, so no morphism \(B L_\sigma\) is
available. The following square replaces it.

\begin{prop}\label{prop:cartesian}
Assume that \(L_\sigma:G\to G\) is an fpqc covering.
Then the square
\[
\xymatrix@C=4em@R=2.5em{
BH \ar[r]^{Bi}\ar[d]_{Bi} & BG\ar[d]^{(\id,B\sigma)}\\
BG \ar[r]_{\Delta} & BG\times_K BG
}
\]
is \(2\)-cartesian, with the \(2\)-commutativity induced by
\(\sigma i=i\).
\end{prop}

\begin{proof}
The fiber product of the graph of \(B\sigma\) with the diagonal
is the stack \(\Fixst_{B\sigma}(BG)\) of pairs
\((P,\varphi:B\sigma(P)\xrightarrow{\sim}P)\).
Its distinguished object is
\[
e=(G,\mathrm{can}:B\sigma(G)\xrightarrow{\sim}G),
\]
where \(\mathrm{can}\) identifies the two trivial torsors.
Its automorphism group is \(H\). We show that every object is
fpqc locally isomorphic to \(e\).

Choose a local section \(z\) of the torsor underlying \(P\).
Its induced section of \(B\sigma(P)\) will be denoted by
\(z_\sigma\). Write \(\varphi(z_\sigma)=za\).
By assumption we can fpqc locally solve \(L_\sigma(g)=a\).
For \(z'=zg\), this gives
\[
\varphi(z'_\sigma)
=\varphi(z_\sigma\sigma(g))=za\sigma(g)=zg=z'.
\]
Thus the trivialization \(\tau:G\xrightarrow{\sim}P\),
\(h\mapsto z'h\), gives a commutative square
\[
\xymatrix@C=3.5em@R=2em{
B\sigma(G)\ar[r]^{\mathrm{can}}\ar[d]_{B\sigma(\tau)}^{\sim}
 &G\ar[d]^{\tau}_{\sim}\\
B\sigma(P)\ar[r]_{\varphi}&P.
}
\]
It therefore identifies the pair \((P,\varphi)\) with \(e\).
The fixed-object stack is consequently the neutral gerbe with
distinguished object of automorphism group \(H\).
The resulting equivalence
\[
BH\xrightarrow{\ \sim\ }\Fixst_{B\sigma}(BG)
\]
has the required projections: each sends \(e\) to the trivial
\(G\)-torsor and induces \(i:H\hookrightarrow G\) on
automorphism groups. Both composites with \(BG\) are therefore
\(Bi\). The comparison at \(e\) is \(\mathrm{can}\), giving
the \(2\)-commutativity induced by \(\sigma i=i\).
\end{proof}

Under the fpqc-covering hypothesis of Prop.~\ref{prop:cartesian},
\(L_\sigma\) is a right \(H\)-torsor. Indeed, over every
test scheme, the equality
\(L_\sigma(g)=L_\sigma(g')\) holds if and only if
\(g'=gh\) for a unique \(h\in H\). Thus
\[
G\times_K H\xrightarrow{\ \sim\ }
G\times_{L_\sigma,G,L_\sigma}G,
\qquad(g,h)\longmapsto(g,gh).
\]
In particular the quotient sheaf \(G/H\) is identified with
\(G\) by Lang's morphism. This is also the base change of
Prop.~\ref{prop:cartesian} along the trivial torsor
\(\Spec K\to BG\) in the upper-right corner.

\begin{rmk}\label{rmk:commutative}
Under the same fpqc-covering hypothesis, if \(G\) is commutative,
\(L_\sigma\) is a homomorphism and
we have the familiar \(2\)-cartesian square
\[
\xymatrix@C=4em@R=2.5em{
BH\ar[r]^{Bi}\ar[d] & BG\ar[d]^{B L_\sigma}\\
\Spec K\ar[r] & BG.
}
\]
Here the lower map is the trivial torsor. This follows from the
exact sequence
\(
1\to H\to G
\xrightarrow{L_\sigma}G\to 1
\)
of fpqc-sheaves of groups.
The square in Prop.~\ref{prop:cartesian} expresses the same
fixed-object construction for arbitrary \(G\).
\end{rmk}

\section{Lang's morphism without smoothness}

\begin{lem}\label{lem:lang}
Let \(G/\Fq\) be connected and put \(\sigma=F_G^{as}\),
where \(s\geq1\).
If \(G\) is of finite type, \(L_\sigma\) is a finite \'etale
surjection, and \(H=\Eq(\sigma,\id_G)\) is finite \'etale.
If \(G\) is affine of arbitrary type, \(L_\sigma\) is an fpqc
covering and \(H\) is profinite \'etale.
\end{lem}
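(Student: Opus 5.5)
We first treat the case of \(G\) of finite type, then reduce the affine case to it by writing \(G\) as a cofiltered limit of connected affine groups of finite type. The key point throughout is that the differential of \(\sigma=F_G^{as}\) vanishes, so \(L_\sigma\) has the same differential as right translation and is étale, even when \(G\) is not smooth.

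\emph{Finite type case.} Base change to \(\overline{\mathbb F}_q\) is fpqc. Being finite étale and surjective descends along it, so we may work over an algebraically closed field \(k\). We begin with étaleness. Consider the morphism
\[
\Phi:G\times G\to G\times G,\qquad (g,x)\mapsto (g,\,x g\sigma(g)^{-1}).
\]
It is an isomorphism over \(G\) (first projection), and it carries the translate \(G\times\{1\}\) to the graph of \(L_\sigma\). Flatness and unramifiedness of \(L_\sigma\) can be checked infinitesimally. Because \(\sigma\) is a power of the absolute Frobenius, \(\sigma^*\) kills \(\Omega^1_G\), so \(d\sigma=0\) on every tangent space, including higher-order ones in the relative sense. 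Hence \(dL_\sigma\) at \(g\) equals the differential of \(x\mapsto xg^{-1}\), which is an isomorphism.

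To avoid smoothness, we argue that \(\Omega_{L_\sigma}=0\): the map \(L_\sigma^*\Omega_G\to\Omega_G\) is identified, via the translation-invariant trivialisation \(\Omega_G\simeq\mathcal O_G\otimes\omega_G\), with the map induced by \(g\mapsto g^{-1}\), which is bijective. Flatness then follows from the equation \(L_\sigma(g)=a\), which is the graph of \(\sigma\) twisted by translation. Concretely, \(L_\sigma\) factors as
\[
G\xrightarrow{\ (\id,\sigma)\ }G\times G\xrightarrow{\ (x,y)\mapsto xy^{-1}\ }G.
\]
The base change of \(G\to G\times G\) under the isomorphism \(\Phi\) is \(G\to G\times G\), \(x\mapsto(x,\sigma(x))\), and this is a closed immersion whose fibres over the second factor are fibres of \(\sigma\).

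I expect this flatness step to be the \textbf{main obstacle}. The plan is to deduce it via the fibrewise criterion. Every fibre \(L_\sigma^{-1}(a)\) is a torsor under \(H\), which is a group scheme with \(\Lie H=\ker(d\sigma-\id)=0\), hence étale. Then \(\dim\) fibre \(=0\), so the image of \(L_\sigma\) has dimension \(\dim G\). By connectedness and irreducibility of \(G_{\mathrm{red}}\), \(L_\sigma\) is therefore dominant.

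Next, \(L_\sigma\) is quasi-finite and unramified. By the torsor description after Prop.~\ref{prop:cartesian}, applied fibrewise, \(G\times H\simeq G\times_{L_\sigma,G,L_\sigma}G\). Thus \(L_\sigma\) is a pseudo-torsor under the étale group \(H\), and étaleness of \(L_\sigma\) follows from étaleness of \(H\) once flatness is known. For flatness itself, I would use that \(G\to G/H\) is an fppf \(H\)-torsor, so finite étale. The pseudo-torsor property then gives a monomorphism \(G/H\to G\), which is quasi-finite, dominant and unramified. Its degree on each component computes to \(1\) because \(\dim\) and the tangent maps agree. A Zariski-main-theorem argument then shows \(G/H\to G\) is an open immersion, and the complement is empty: this is the classical Lang trick with orbits of \(g\mapsto xg\sigma(x)^{-1}\) being open and \(G\) irreducible. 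Hence \(L_\sigma\) is a surjective \(H\)-torsor, so it is finite étale.

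Finiteness of \(H\) follows since \(H\) is étale of finite type with \(H(k)\subset G(\mathbb F_{q^s})\) finite.

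\emph{Affine case.} Write \(G=\varprojlim G_\lambda\) as a cofiltered limit of affine finite type quotients, with faithfully flat affine transition maps. The images of a connected \(G\) are connected, and each \(L_{\sigma,\lambda}\) is compatible with the transition maps. Then \(H=\varprojlim H_\lambda\) is profinite étale. Moreover, \(L_\sigma\) is a limit of the finite étale surjections \(L_{\sigma,\lambda}\); to make the limit work we use the torsors \(G\to G/H\) levelwise. It follows that \(L_\sigma\) is an affine, faithfully flat surjection, hence an fpqc covering.
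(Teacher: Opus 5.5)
Your unramifiedness computation is fine, and so is étaleness of $H$ via $\mathrm{Lie}\,H=0$. (Minor slip: $dL_\sigma$ at $g$ is right translation by $\sigma(g)^{-1}$, not inversion; $\Omega_{L_\sigma}=0$ still holds.) But there is a genuine gap at the step you flagged yourself: flatness of $L_\sigma$ when $G$ is not smooth. That case is the point of the lemma. Your chain of reasoning runs: $G/H\to G$ is a quasi-finite, dominant, unramified monomorphism; its degree is $1$ because dimensions and tangent maps agree; Zariski's main theorem then makes it an open immersion. This only works when $G$, and hence $G/H$, is regular. There an unramified map between smooth varieties of equal dimension is étale, and an étale monomorphism is an open immersion. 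For non-reduced $G$, dimension and tangent spaces do not detect the nilpotent structure. For example, $\Spec k[x]/(x^2)\hookrightarrow\Spec k[x]/(x^3)$ is a bijective unramified monomorphism between schemes of the same dimension, inducing an isomorphism on tangent spaces, yet it is not flat. For the same reason, zero-dimensional fibres do not give flatness: miracle flatness needs a regular target. As written, you have proved the finite-type case only for smooth $G$, which is enough for $\GL_r$. The affine case inherits the gap, since the finite-type quotients $G_\lambda$ need not be smooth.

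The missing idea is to run the infinitesimal lifting criterion directly. You gesture at this (flatness checked infinitesimally, higher-order vanishing of $d\sigma$) but never carry it out. Let $j:T_0\hookrightarrow T$ be a square-zero closed immersion of affine $\Fq$-schemes. Then $F_T^{as}=j\circ t$ for some $t:T\to T_0$, because $q^s\geq 2$ kills the ideal. Hence every $g:T\to G$ satisfies $\sigma(g)=g\circ F_T^{as}=g|_{T_0}\circ t$, so $\sigma$ of a lift is determined by $g_0$. Given $L_\sigma(g_0)=a|_{T_0}$, the only candidate lift is $g=a\cdot(g_0\circ t)$. It is indeed a lift, since $t\circ j=F_{T_0}^{as}$, and it satisfies $L_\sigma(g)=a$.

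This is the paper's argument. It gives formal étaleness, hence étaleness, with no smoothness or dimension theory. Surjectivity then follows from Lang's theorem on $G_{\red}$, which has the same geometric points as $G$. Finiteness follows because $L_\sigma$ is a torsor under the finite étale $H$.

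Your own route could alternatively be repaired by homogeneity, but this argument is absent from your proposal. Work over $\overline{\mathbb F}_q$, and let $x\in G/H$ map to $y\in G$. The unramified map gives a surjection $\hat{\mathcal O}_{G,y}\twoheadrightarrow\hat{\mathcal O}_{G/H,x}$. The target is abstractly isomorphic to the source, by étaleness of $G\to G/H$ and translations. Since a surjective endomorphism of a Noetherian ring is injective, the surjection is an isomorphism.

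Once the finite-type case holds, your affine step agrees with the paper's. All that is needed is that a filtered colimit of faithfully flat ring maps is faithfully flat; the levelwise torsors $G\to G/H$ play no role.
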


\begin{proof}
First suppose that \(G\) is of finite type. Since \(\Fq\)
is perfect, \(G_{\red}\) is a smooth connected subgroup.
Its identity section and \cite[\href{https://stacks.math.columbia.edu/tag/056R}{056R}]{stacks-project}
show that it is geometrically connected. Lang's theorem
\cite[Thm.~1 and Cor., p.~557]{Lan56} gives surjectivity of
\(L_\sigma\) on geometric points of \(G_{\red}\), hence of \(G\).

To prove \'etaleness without smoothness, consider a square-zero
closed immersion \(j:T_0\hookrightarrow T\) of affine
\(\Fq\)-schemes. Suppose that \(g_0:T_0\to G\) and
\(a:T\to G\) satisfy \(L_\sigma(g_0)=a|_{T_0}\).
Since \(q^s\geq2\), the \(q^s\)-power map kills the defining ideal
of \(T_0\); hence \(F_T^{as}=j\circ t\) for a morphism
\(t:T\to T_0\). Put \(b=g_0\circ t\) and \(g=ab\).
Then
\[
b|_{T_0}=\sigma(g_0),\qquad
 g|_{T_0}=a|_{T_0}\,\sigma(g_0)=g_0.
\]
Naturality of Frobenius gives
\[
\sigma(g)=g\circ F_T^{as}=g\circ j\circ t
 =g|_{T_0}\circ t=g_0\circ t=b.
\]
Consequently,
\[
L_\sigma(g)=g\sigma(g)^{-1}=gb^{-1}=a.
\]
Any other solution \(h\) has \(\sigma(h)=b\); the equation
\(L_\sigma(h)=a\) then forces \(h=ab=g\).
Thus \(L_\sigma\) is formally \'etale.
It is locally of finite presentation, and is therefore \'etale.
It is also quasi-compact, so its surjectivity makes it an fpqc covering.

Its identity fiber \(H\) is an \'etale scheme of finite type
over a field, hence finite \'etale. The torsor identity established
after Prop.~\ref{prop:cartesian} shows that \(L_\sigma\)
is an \(H\)-torsor; in particular it is finite. Its geometric
fixed points are exactly \(G(\mathbb F_{q^s})\), which proves
\(H_{\mathbb F_{q^s}}
\simeq G(\mathbb F_{q^s})_{\mathbb F_{q^s}}^{\mathrm{const}}\).

Now let \(G\) be affine. Write \(G=\varprojlim_iG_i\) as a
limit of connected finite-type affine quotient group schemes (cf.~\cite[Cor.~2.7]{DM82}). Then
\[
L_\sigma=\varprojlim_i L_{\sigma,i},\qquad
H=\varprojlim_i H_i.
\]Here the second identification follows because equalizers
commute with inverse limits and Frobenius is compatible
with the transition morphisms.
Each \(L_{\sigma,i}\) is faithfully flat and each \(H_i\)
is finite \'etale by the finite-type case. Filtered colimits of
faithfully flat ring maps are faithfully flat, so \(L_\sigma\)
is fpqc and \(H\) is profinite \'etale. 
\end{proof}

\begin{proof}[Proof of Theorem~{\hyperref[thm:main]{\ref*{thm:main}}}]
Naturality identifies \(B\sigma\circ u\) with
\(u\circ F_{\sX}^{as}\), so \(\alpha\) makes the outer cone
\(2\)-commutative in
\[
\xymatrix@C=3em@R=2.2em{
\sX\ar@{-->}[dr]^{v}\ar@/^1pc/[drr]^{u}
  \ar@/_1pc/[ddr]_{u}&&\\
 &BH\ar[r]^{Bi}\ar[d]_{Bi}&BG\ar[d]^{(\id,B\sigma)}\\
 &BG\ar[r]_{\Delta}&BG\times_{\Fq}BG.
}
\]
The square is \(2\)-cartesian by Lem.~\ref{lem:lang} and
Prop.~\ref{prop:cartesian}. Its universal property gives the
asserted equivalence, naturally in \(\sX\).
\end{proof}

\Needspace{20\baselineskip}
\section{The Lange--Stuhler corollary}

\begin{cor}\label{cor:LS}
Let \(k\) be any field of characteristic \(p>0\), let
\(\sX\) be any category fibered in groupoids over \(k\), and let \(E\) be
a vector bundle of constant rank \(r\). Suppose that
\(F_{\sX}^{n,*}E\simeq E\), with \(n\geq1\).
There is a finite \'etale \(k\)-group scheme
\[
H_{r,n,k}:=
\Eq(F_{\GL_{r,\Fp}}^n,\id)\times_{\Fp}k
\]
and an \(H_{r,n,k}\)-torsor \(u:\sX\to B_kH_{r,n,k}\)
such that \(E\simeq u^*V\), where \(V\) is the vector bundle
on \(B_kH_{r,n,k}\) corresponding to the inclusion-induced map
\(B_kH_{r,n,k}\to B_k\GL_r\).
The cover
\[
T=\sX\times_{B_kH_{r,n,k}}\Spec k\longrightarrow\sX
\]
is representable finite \'etale and surjective, and trivializes
\(E\). If \(k\supset\mathbb F_{p^n}\), in particular if \(k\) is
algebraically closed, one can take the finite constant group
\(H_{r,n,k}=\GL_r(\mathbb F_{p^n})_k^{\mathrm{const}}\).
\end{cor}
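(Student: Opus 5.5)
Apply Theorem~\ref{thm:main} with \(q=p\) (so \(a=1\)), \(s=n\), \(G=\GL_{r,\Fp}\), regarding \(\sX\) as fibered over \(\Fp\) through \(\Spec k\to\Spec\Fp\). Then base change the resulting \(H\)-torsor to \(k\).

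\textbf{Step 1: produce the triangle.} The rank \(r\) bundle \(E\) is the same as a morphism \(u:\sX\to B\GL_{r,\Fp}\): take the frame bundle \(\mathrm{Isom}(\mathcal O^r,E)\), and note that \(B_k\GL_r=B\GL_{r,\Fp}\times_{\Fp}k\). The pullback \(F_{\sX}^{n,*}E\) corresponds to \(u\circ F_{\sX}^{n}\), because Frobenius pullback of a scheme-valued object \(x\) is \(F_S^*x\). So the isomorphism \(F_{\sX}^{n,*}E\simeq E\) is exactly a \(2\)-isomorphism \(\alpha:uF^n_{\sX}\Rightarrow u\). Theorem~\ref{thm:main} then gives \(v:\sX\to BH\), with \(H=\Eq(F^n_{\GL_{r,\Fp}},\id)\), such that \(Bi\circ v\simeq u\). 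The theorem also says \(H\) is finite étale over \(\Fp\), since \(\GL_r\) is connected of finite type.

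\textbf{Step 2: pass to \(k\).} Since \(\sX\) lies over \(k\), the map \(v\) factors through \(BH\times_{\Fp}\Spec k=B_kH_{r,n,k}\). Call this factorization \(u\), as in the statement. The inclusion \(H_{r,n,k}\hookrightarrow\GL_{r,k}\) gives the map \(B_kH_{r,n,k}\to B_k\GL_r\), and \(V\) is the pullback of the tautological bundle along it. The relation \(Bi\circ v\simeq u\) then yields \(E\simeq u^*V\).

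\textbf{Step 3: the cover \(T\).} The map \(\Spec k\to B_kH_{r,n,k}\) is the universal torsor, and it is representable, finite étale and surjective because \(H_{r,n,k}\) is finite étale. These properties are stable under base change, so \(T\to\sX\) has them. Moreover \(E|_T\) is pulled back from \(V|_{\Spec k}\), which is the trivial bundle \(k^r\). So \(E|_T\) is trivial.

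\textbf{Step 4: the constant case.} If \(\mathbb F_{p^n}\subset k\), base change the last assertion of Theorem~\ref{thm:main}, namely \(H_{\mathbb F_{p^n}}\simeq\GL_r(\mathbb F_{p^n})^{\mathrm{const}}\), further to \(k\). This gives \(H_{r,n,k}=\GL_r(\mathbb F_{p^n})_k^{\mathrm{const}}\).

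The main obstacle is bookkeeping in Step 1. The Frobenius \(F_{\sX}\) is only \(\Fp\)-linear, so the triangle has to be formed over \(\Fp\) and not over \(k\). The \(k\)-structure of \(v\) is then recovered from the structure map \(\sX\to\Spec k\). One must also check that the identification of \(E\) with \(u\) respects Frobenius, i.e. that \(F^{n,*}\) on bundles matches precomposition with \(F^n_{\sX}\); this holds by \(F^n_{BG}\simeq B(F^n_G)\) together with the naturality \(B\sigma\circ u\simeq u\circ F^n_{\sX}\) used in the proof of Theorem~\ref{thm:main}.
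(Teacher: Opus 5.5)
Your proposal is correct and follows the paper's proof essentially step for step. You read the isomorphism $F_{\sX}^{n,*}E\simeq E$ as a $2$-isomorphism $uF_{\sX}^{n}\Rightarrow u$ for the classifying map $u\colon\sX\to B_{\Fp}\GL_r$, apply Theorem~\ref{thm:main} over $\Fp$ with $q=p$ and $s=n$, pair the resulting lift with the structure map $\sX\to\Spec k$, and get the properties of $T$ by base change of $\Spec k\to B_kH_{r,n,k}$. The constant-group case then comes from the last assertion of Theorem~\ref{thm:main} (equivalently Lem.~\ref{lem:lang}), base changed along $\mathbb F_{p^n}\subset k$.
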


\begin{proof}
Choose an isomorphism \(\varphi:F_{\sX}^{n,*}E\to E\).
The bundle \(E\) defines a morphism
\(v:\sX\to B_{\Fp}\GL_r\), and \(\varphi\) is precisely
a \(2\)-isomorphism \(vF_{\sX}^n\Rightarrow v\).
Thm.~\ref{thm:main}, applied with \(q=p\) and \(s=n\), gives
a lift to the classifying stack of
\(\Eq(F_{\GL_r}^n,\id)\). Combining this lift with the
structure map \(\sX\to\Spec k\) gives \(u\).
Since the composite
\[
\sX\xrightarrow{\ u\ }B_kH_{r,n,k}
\xrightarrow{\ i_k\ } B_k\GL_r
\]
classifies \(E\), we have \(E\simeq u^*i_k^*W\), where \(W\) is the universal
bundle on \(B_k\GL_r\).
The pullback of \(E\) to \(T\) is trivial as \(T\to \sX\to B_kH_{r,n,k}\) factors \(k\), and the
assertions about \(T\) follow from the fact that the map \(\Spec k\to
B_kH_{r,n,k}\) is finite étale and surjective. The constant-group description
follows from Lem.~\ref{lem:lang}.
\end{proof}

\section*{Acknowledgements}
\small{I would like to thank A.~Langer and D.~Weißmann for helpful discussions.
The author was partially supported by the General Program of the National Natural Science Foundation of China (Grant No.~12671059) and the Guangdong Basic and Applied Basic Research Foundation (Grant No.~2025A1515012175). }

\printbibliography
\end{document}

%% file: packages_and_functions.for_preamble.tex
\usepackage{mathabx}
\usepackage{bbm}
\usepackage[T1]{fontenc}
\usepackage{amsfonts}
\usepackage{braket}
\usepackage{mathrsfs}
\usepackage{stmaryrd}
\usepackage{upgreek}
\usepackage{textcomp}

\usepackage{extarrow}
\usepackage{enumerate}
\usepackage{enumitem}
\usepackage{graphicx}
\usepackage[colorlinks=true, linkcolor=blue]{hyperref}
\usepackage{mathtools}
\usepackage{mathtext}
\usepackage{amsmath}
\usepackage{amssymb}
\usepackage{amsthm}
\usepackage{tikz}
\usetikzlibrary{arrows.meta, positioning}
\usepackage[all,cmtip]{xy}
\usepackage[citestyle=alphabetic, backend=biber,
 bibstyle=alphabetic, giveninits=true, uniquelist = false, uniquename=init, isbn=false, maxcitenames=3, 
 maxbibnames=999, doi=false, url=false
 ]{biblatex}
\IfFileExists{Crystalline_crystals_references.bib}
  {\addbibresource{Crystalline_crystals_references.bib}}{}
\AtEveryBibitem{\clearlist{language}}

\usepackage[a4paper,margin=1.2in]{geometry}

\usepackage[colorinlistoftodos, textwidth=2.7cm]{todonotes}
\setuptodonotes{fancyline}
\usepackage{setspace} 

\usetikzlibrary{arrows}
\usetikzlibrary{matrix}
\usetikzlibrary{shapes}
\usetikzlibrary{decorations.pathmorphing,decorations.pathreplacing}
\usetikzlibrary{matrix}

\DeclareMathOperator{\GL}{\textup{GL}}

\DeclareMathOperator{\Spec}{\textup{Spec}}

%% file: packages_and_functions.tex
\global\long\def\F{\mathbb{F}}

\global\long\def\id{\textup{id}}

\newcommand{\sX}{{\mathcal X}}

\newcommand{\Hom}{{\rm Hom}}

\newcommand{\red}{{\rm red}}